\theoremstyle{plain}
\numberwithin{equation}{section}
\newtheorem{theo}{Theorem}[section]
\newtheorem{lem}[theo]{Lemma}
\newtheorem{cor}[theo]{Corollary}
\newtheorem{defi}[theo]{Definition}
\theoremstyle{definition}
\newtheorem{rem}[theo]{Remark}
\def\ot{\otimes}
\def\om{\omega}
\def\lan{\langle}
\def\ran{\rangle}
\def\al{\alpha}
\def\de{\delta}
\def\pa{\partial}
\def\la{\lambda}
\def\k{{\bf k}}
\def\c{{\bf c}}
\def\sl{{\mathfrak{sl}}}
\newcommand{\E}{{\mathcal{E}}}
\newcommand{\R}{{\mathcal R}}
\newcommand{\cH}{{\mathcal H}}
\newcommand{\cL}{{\mathcal L}}
\newcommand{\W}{\mathcal W}
\def\bZ{{\mathbb Z}}
\def\bC{{\mathbb C}}
\def\mh{\mathfrak{h}}
\def\mg{\mathfrak{g}}
\def\Res{\mbox{\rm Res}}
\def\End{\text{\rm End}}
\def\id{\text{\rm id}}
\def\Der{\mbox{\rm Der}}
\def\NO{\mbox{\,$\circ\atop\circ$}\,}
\begin{document}
\title[]
{Representations for  three-point Lie algebras of genus zero }

\author{Dong Liu}
\address{Department  of Mathematics, Huzhou Teachers College, Zhejiang Huzhou, 313000, China}
\email{liudong@zjhu.edu.cn}
\author{Yufeng Pei}
\address{Department of Mathematics, Shanghai Normal University,
Shanghai, 200234, China} \email{pei@shnu.edu.cn}

\author[Xia]{Limeng Xia}

\address{Institute of Applied System Analysis, Jiangsu University, Jiangsu Zhenjiang, 212013,
China}\email{xialimeng@ujs.edu.cn}

\thanks{Mathematics Subject Classification: 17B65; 17B68;17B69. Key words:  three-point algebras of genus zero, affine Lie algebras, Virasoro algebra}

\date{}

\begin{abstract}
In this paper, we study  representations for  three-point  Lie algebras of genus
zero  based on the Cox-Jurisich's presentations. We construct two functors which transform simple restricted modules with nonzero levels over the standard affine algebras into simple modules over the three-point affine algebras of genus
zero. As a corollary, vertex  representations are constructed for the three-point affine algebra of genus
zero using vertex operators. Moreover, we construct a Fock module for certain quotient of three-point Virasoro algebra of genus
zero.
\end{abstract}
\maketitle

\section{Introduction}
Krichever-Novikov (KN) algebras were introduced and studied by Krichever and Novikov \cite{KN87a,KN87b,KN89} via meromorphic objects on compact Riemann
surfaces of arbitrary genus with two marked points where poles are allowed.  This generalizes the classical Virasoro and the affine Lie algebras, which correspond to the geometric situation of genus zero with possible poles only at $\{0,\infty\}$.
Schlichenmaier \cite{S90a,S90b,S90c} extended the whole setup to the multipoint case for arbitrary genus. KN algebras are used to construct analogues of important mathematical objects used in string theory. Moreover structure and representation theory have been developed for general KN algebras, see \cite{S2014} and references therein. The n-point affine Lie algebras which are a type of Krichever-Novikov algebras
of genus zero also appeared in the work of Kazhdan and Lusztig \cite{KL93} and in the description of the conformal blocks \cite{FBZ01}. Bremner explicitly described the universal central extensions of these algebras in \cite{Bre94a}. The n-point Virasoro algebras  were studied in \cite{CGLZ14}, which are natural generalizations of the classical Virasoro algebra and have as quotients multipoint genus zero KN algebras. Their universal central extensions and some other
algebraic properties were also determined. The modules of densities for the n-point Virasoro algebras were constructed and investigated. Vertex algebras associated to certain related KN type algebras have been studied in \cite{LS11,PS16}.

The three-point Lie algebras of genus zero are probably the simplest nontrivial examples beyond  the affine Kac-Moody algebras and the Virasoro algebra. Recall the three-point ring $R$ denotes the ring of rational
functions with poles only in the set $\{a_1, a_2, a_3\}$, which  is isomorphic to $\bC[s,s^{-1}, (s-1)^{-1}]$. Schlichenmaier has a different description of the
three-point ring using $\bC[(z^2-a^2)^k,z(z^2-a^2)^k\mid k\in\bZ]$ where $a\neq0$. Cox and Jurisich \cite{CJ14} observed that the three-point ring $R$ is isomorphic to $\bC[t,t^{-1},u\mid u^2=t^2+4t]$. The three-point affine Kac-Moody algebras are the universal central extensions of the three-point loop algebras $\mg\ot R$, where $\mg$ is a finite dimensional simple Lie algebra. Following
 from the work of Schlichenmaier \cite{S2014}, every three-point affine Kac-Moody algebra has  a two-dimensional center.
Benkart and Terwilliger \cite{BT07} determined the universal central extension of the three-point loop algebra $\sl(2)\ot R$ in
terms of the tetrahedron algebra. Ito and Terwilliger  described the finite-dimensional
irreducible  representations for $\sl(2)\ot R$.
Free field realizations for the three-point affine algebras associated to $\sl(2)$ have been constructed by  Cox and Jurisich \cite{CJ14}. The three-point Virasoro algebra is the universal central extension of the three-point Witt algebra $\Der R$.
In recent papers \cite{CJM16,CJM18}, Ben Cox et al.,  gave  Fock modules and  the Segal-Sugawara construction for the three-point Virasoro algebra on the Fock space of the three-point affine algebra of type $A_1$. Free field representations of the elliptic algebras \cite{Bre94b} and the 4-point algebras \cite{Bre95}  have been studied in \cite{BCF09,Cox08}.

As shown by Schlichenmaier \cite{S2003}  one has to consider an almost-grading, i.e., a notion of positive and negative operators if one wants to construct representations of the three-point affine  algebra. Because there is up to equivalence and rescaling only one local cocycle compatible with the almost-grading. This means from the representation theory side that the three-point affine  algebra should be the corresponding algebra with one-dimensional center. Moreover, considering the Segal-Sugawara construction for the three-point affine algebra, an almost-grading is also needed \cite{SS}.

In this paper, we study  representations for the three-point Lie algebras of genus
zero. We will use Cox and Jurisich's presentation of the three-point ring. It is convinient to compute defining relations  when  we construct
explictly representations of the three-point algebras. We present natural realizations of the three-point affine algebras in term of the infinite-dimensional Lie algebras related to the ring of formal Laurent series in $t^{\pm1}$. Then we construct two functors which transform simple restricted modules with nonzero levels over the standard affine algebras into simple modules over the three-point affine algebras.  To the best of our knowledge, these simple modules for  the three-point affine algebras are new. As a corollary, certain  simple modules of nonzero levels for the three-point affine algebras associated to $\sl(2)$ can be realized via the standard vertex operators. Furthermore, using  the  standard Heisenberg algebra,  we  construct a Fock representation for certian quotient of  three-point Virasoro algebra. This can be viewed as an analogue of the classical Virasoro construction. Note that the general Segal-Sugawara construction of KN type algebras for every number of points and any genus  was shown by Schlichenmaier and Sheinman \cite{SS}.	

Our approach  can be applied to study representations of the hyperelliptic  algebras \cite{Cox16} and the superelliptic  algebras \cite{CGLZ17}, which are natural generalizations of the n-point algebras. We will discuss this topic in future paper.

Our paper is organized as follows.  In Section 2, we construct irreducible representations for the three-point affine Lie algebras and give  vertex operator realizations of the three-point affine Lie algebras. In Section 3, we  construct a Fock representation for the three-point Virasoro algebra.

Throughout the paper, we shall use $\bC$ and $\bZ$ to denote the sets of complex numbers and integers respectively. All of algebras are over the complex number
field $\bC$.

\section{Three-point affine algebras}
In this section, we construct two functors which transform simple restricted modules with nonzero levels over the standard affine algebras into simple modules over the three-point affine algebras.

Let $\mg$ be a  Lie algebra with a
non-degenerate symmetric invariant bilinear form $\lan\cdot, \cdot\ran$.  The  affine Lie algebra associated to $\mg$, denoted by $\hat\mg$, is the universal central
extension of the Lie algebra $\mg\ot\bC[t,t^{-1}]$:
$$
\hat\mg=\mg\ot\bC[t,t^{-1}]\oplus\bC\k
$$
with the bracket relations:
\begin{eqnarray*}
{[a(m), b(n)]}&=&[a,b](m+n)+m\lan a,b\ran\de_{m+n,0}\k,\quad [\k,\hat\mg]=0,
\end{eqnarray*}
for $a,b\in\mg,,m,n\in\bZ$, where $a(n)=a\ot t^m$.

\begin{defi}
If $W$ is a $\hat\mg$-module on which $\k$ acts as a complex
scalar $\ell$,  we say that $W$ is of level $\ell$.  We define a category $\E^+$ to consist of $\hat\mg$-modules $W$ if for every $a\in\mg$
and $w\in W$, $a(n)w = 0$ for $n$ sufficiently large. Similarly, we also define a category $\E^-$ to consist of $\hat\mg$-modules $W$ if for every $a\in\mg$
and $w\in W$, $a(n)w = 0$ for $n$ sufficiently small.
\end{defi}

Denote by $\R$ the
quotient algebra of $\bC[t, t^{-1}, u]$ modulo relation
$$
u^2=p(t):=t^2+4t.
$$
Let $\mg^1$ be a vector
space isomorphic to $\mg$, with a fixed linear isomorphism $a\in\mg\mapsto a^1\in \mg^1$.

We begin by recalling three-point affine Lie algebras of genus zero.
\begin{defi}[\cite{CJ14}]\label{3a}  The three-point affine Lie algebra of genus zero associated to $\mg$, denoted by $\hat\mg_p$, is the universal central
extension of the Lie algebra $\mg\ot\R$:
$$
\hat\mg_p=\mg\ot\bC[t,t^{-1}]\oplus\mg^1\ot\bC[t,t^{-1}]\oplus\bC\k_+\oplus\bC\k_{-}
$$
with the bracket relations:
\begin{eqnarray*}
{[a(m), b(n)]}&=&[a,b](m+n)+m\lan a,b\ran\de_{m+n,0}(\k_++\k_{-}),\\
{[a(m), b^1(n)]}&=&[a^1(m),b(n)]=[a,b]^1(m+n)+m\lan a,b\ran\la_{m+n+1}4^{m+n+1}\k_{-},\\
{[a^1(m), b^1(n)]}&=&4[a,b](m+n+1)+[a,b](m+n+2)\\
                &&+\left((4m+2)\de_{m+n+1,0}+(m+1)\de_{m+n+2,0}\right)\lan a,b\ran(\k_++\k_{-}),\\
{[\hat\mg_p,\k_\pm]}&=&[\k_\pm,\hat\mg_p]=0,
\end{eqnarray*}
for $a,b\in\mg, a^1,b^1\in \mg^1,m,n\in\bZ$,
where $a(n)=a\ot t^m,a^1(m)=a^1\ot t^m$,
\begin{eqnarray}
\lambda_0=1, \lambda_n=\frac{\frac12\cdot(\frac12-1)\cdots(\frac12-(n-1))}{n!}.
\end{eqnarray}
\end{defi}

\begin{rem}
Note that we have slightly modified the original relations in \cite{CJ14} by replacing the
central elements $\om_0,\om_1$ of $\hat\mg_p$ to $-(\k_++\k_{-}),-\k_{-}$, respectively.
\end{rem}

\begin{rem}
For $\mg$ a simple Lie algebra, $\hat\mg_p$ is the three-point affine Kac-Moody algebra of genus zero; for $\mg$ abelian,
$\hat\mg_p$ is the three-point Heisenberg algebra of genus zero.
\end{rem}

To better study representations of $\hat\mg_p$, we shall make use of the following
affine Lie algebras $\tilde\mg^\pm=\mg\ot\bC((t^{\pm1}))\oplus\bC\k$ in \cite{FBZ01}, defined by
$$
[a\ot f, b\ot g]=[a,b]\ot fg+\lan a,b\ran \Res_{t} (f'g)\k,\quad [\k, \tilde\mg^\pm]=[\tilde\mg^\pm,\k]=0
$$
for $a,b\in\mg, f,g\in \bC((t^{\pm1}))$, where
$
\Res_{t} f$
denotes the coefficient of $t^{-1}$ in $f$.


\begin{lem}\label{emb}
	\begin{itemize}
\item[(1)]Let
$
\rho_+:  \hat\mg_p\hookrightarrow \tilde\mg^{+},
$
be  a linear map defined
by
\begin{eqnarray*}
	\rho_+(a(n))&=&a\otimes t^{2n},\\ \rho_+(a^1(n))&=& 2\sum_{i=0}^\infty \frac{\lambda_i}{4^i}a\otimes t^{2n+2i+1},\\
	\rho_+(\k_+)&=& 2\k,\\ \rho_+(\k_-)&=&0,
\end{eqnarray*}for $a\in\mg, n\in\bZ$.
Then $\rho_+$ is  a homomorphism of Lie algebras
\item[(2)] Let
$
\rho_-:  \hat\mg_p\hookrightarrow \tilde\mg^{-}
$  be a  linear map defined by
\begin{eqnarray*}
	\rho_-(a(n))&=&a\otimes t^{n},\\ \rho_-(a^1(n))&=& \sum_{i=0}^\infty {\lambda_i}{4^i}a\otimes t^{n-i+1},\\
	\rho_-(\k_+)&=& 0,\\ \rho_-(\k_-)&=&\k
\end{eqnarray*}
for $a\in\mg, n\in\bZ$.	Then $\rho_-$ is a homomorphisms of Lie algebras.	
\end{itemize}

\end{lem}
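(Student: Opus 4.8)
The plan is to use that $\rho_\pm$ are already prescribed on the spanning set $\{a(m),\,a^1(m),\,\k_+,\,\k_-\}$ of $\hat\mg_p$, so it suffices to verify $\rho_\pm([X,Y]) = [\rho_\pm(X),\rho_\pm(Y)]$ as $X,Y$ run over these generators. Because $\k_+,\k_-$ are central in $\hat\mg_p$ and their images ($2\k,0$ for $\rho_+$; $0,\k$ for $\rho_-$) are central in $\tilde\mg^\pm$, every bracket with a central argument is respected automatically. Hence only three bracket types remain: $[a(m),b(n)]$, $[a(m),b^1(n)]$, and $[a^1(m),b^1(n)]$. Moreover, once $\rho_\pm$ is shown to respect the pattern $[a(m),b^1(n)]$ for all arguments, it respects $[a^1(m),b(n)]=-[b(n),a^1(m)]$ as well, by antisymmetry of the bracket in $\tilde\mg^\pm$; so the mixed type costs no extra work.

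The engine of every computation is that $\lambda_i=\binom{1/2}{i}$ are the Taylor coefficients of $(1+x)^{1/2}$, giving
\begin{equation}
\sum_{i\ge 0}\frac{\lambda_i}{4^i}\,t^{2i} = \Big(1+\tfrac{t^2}{4}\Big)^{1/2},
\qquad
\sum_{i\ge 0}\lambda_i\,4^i\,t^{-i} = \Big(1+\tfrac{4}{t}\Big)^{1/2}.
\end{equation}
These are the two expansions of $u=\sqrt{t^2+4t}$ that define $\rho_+$ and $\rho_-$, and squaring them returns the \emph{polynomials} $1+t^2/4$ and $1+4/t$; this is precisely the relation $u^2=t^2+4t$ of $\R$, and it is what will collapse the infinite series in the images of the $a^1(n)$ to finite expressions.

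For the first two bracket types the check is a direct use of the residue cocycle. For $[a(m),b(n)]$ one has $\Res_t\big((t^{2m})'t^{2n}\big)=2m\,\delta_{m+n,0}$ under $\rho_+$, which pairs with $\rho_+(\k_++\k_-)=2\k$, and similarly under $\rho_-$. For $[a(m),b^1(n)]$ under $\rho_+$ every residue exponent $2m+2n+2i$ is even, hence never $-1$, so the central term vanishes, matching $\rho_+(\k_-)=0$; under $\rho_-$ the unique surviving residue forces $i=m+n+1$ and reproduces $m\langle a,b\rangle\lambda_{m+n+1}4^{m+n+1}\k$.

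The main obstacle is the bracket $[a^1(m),b^1(n)]$. Its non-central part factors as a product of two copies of the relevant half-series; by the display above this product is $1+t^2/4$ (resp.\ $1+4/t$) times a monomial, which after applying $\rho_\pm$ is exactly $4[a,b](m+n+1)+[a,b](m+n+2)$. The delicate part is the central term, which leads to sums such as $\sum_{i+j=N}\frac{\lambda_i\lambda_j}{4^{i+j}}(2m+2i+1)$ with $N=-(m+n+1)$. To evaluate these I would differentiate the identity $F(x)^2=1+x/4$ (resp.\ $G(x)^2=1+4x$), obtaining $F'F=\tfrac18$ (resp.\ $G'G=2$); reading off coefficients then yields at once
\[
\sum_{i+j=N}\frac{\lambda_i\lambda_j}{4^{i+j}} = \delta_{N,0}+\tfrac14\delta_{N,1},
\qquad
\sum_{i+j=N} i\,\frac{\lambda_i\lambda_j}{4^{i+j}} = \tfrac18\,\delta_{N,1},
\]
and analogously for $G$. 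Substituting with $N=-(m+n+1)$ collapses the central sum to $\big((8m+4)\delta_{m+n+1,0}+(2m+2)\delta_{m+n+2,0}\big)\langle a,b\rangle\k$, which is exactly $(4m+2)\delta_{m+n+1,0}+(m+1)\delta_{m+n+2,0}$ multiplied by $\rho_+(\k_++\k_-)=2\k$, i.e.\ the image of the stated central term; the $\rho_-$ computation is entirely analogous, using $G'G=2$ and $\rho_-(\k_++\k_-)=\k$. Verifying this cancellation for both signs is the one genuinely nontrivial step; with it in hand, both $\rho_+$ and $\rho_-$ are Lie algebra homomorphisms.
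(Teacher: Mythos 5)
Your proposal is correct and takes essentially the same route as the paper: both proofs verify the bracket relations on the generators $a(m)$, $a^1(m)$, $\k_\pm$ using the two Laurent expansions of $\sqrt{p(t)}$ (in $t$ and in $t^{-1}$) and the fact that squaring these series collapses them back to the polynomial $p(t)$, with the central terms computed as residues. Your coefficient identities obtained by differentiating $F^2=1+\tfrac{x}{4}$ and $G^2=1+4x$ are just an explicit, coefficient-level rendering of the paper's residue evaluation $\Res_t\bigl(\sqrt{p(t^2)}\,t^{2m}\bigr)'\sqrt{p(t^2)}\,t^{2n}$ (which the paper asserts without showing the intermediate steps, and with a typo $\de_{m+n+2,0}$ where $\de_{m+n+1,0}$ is meant), so the two arguments coincide in substance.
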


\begin{proof}
(1) There is a canonical embedding of the three-point ring:
$$
\iota_+: \R\to \bC((t^{\frac{1}{2}})).
$$
For $f\in \R$ , $\iota_+f$ is the expansion of $f$ as a formal Laurent series in $t^{\frac{1}{2}}$. In particular,
\begin{eqnarray*}
\sqrt{p(t)}=2t^\frac12 \sqrt{1+\frac{t}{4}} &=&2t^\frac12\sum_{n=0}^\infty\frac{\lambda_n}{4^n}t^n\in\bC((t^\frac12)).
\end{eqnarray*}

 For $a,b\in\mg, m,n\in\bZ$, we have
\begin{eqnarray*}
&&{[\rho_+(a(m)), \rho_+(b(n))]}=[a,b]\ot t^{2(m+n)}+2m\lan a,b\ran\de_{m+n,0}\k=\rho_+[a(m),b(n)],
\end{eqnarray*}
\begin{eqnarray*}
{[\rho_+(a(m)), \rho_+(b^1(n))]}&=&[a,b]\ot 2\sum_{i=0}^\infty \frac{\lambda_i}{4^i} t^{2m+2n+2i+1}+2m\lan a,b\ran \Res_t 2\sum_{i=0}^\infty \frac{\lambda_i}{4^i}t^{2m+2n+2i} \k \\
	&=&[a,b]\ot 2\sum_{i=0}^\infty \frac{\lambda_i}{4^i} t^{2m+2n+2i+1}\\
	&=&\rho_+[a(m),b^1(n)],
	\end{eqnarray*}
\begin{eqnarray*}
	{[\rho_+(a^1(m)), \rho_+(b^1(n))]}&=&[a,b]\ot \left(2\sum_{i=0}^\infty \frac{\lambda_i}{4^i} t^{2m+2i+1}\right)\left(2\sum_{j=0}^\infty \frac{\lambda_j}{4^j} t^{2n+2j+1}\right)\\
	&&+\lan a,b\ran \Res_t \left(\sqrt{p(t^2)}t^{2m}\right)'\sqrt{p(t^2)}t^{2n} \k\\
	&=&[a,b]\ot (t^4+4t^2)t^{2m+2n}\\
	&&+((4m+2)\de_{m+n+2,0}+(m+1)\de_{m+n+2,0})(2\k)\\
	&=&\rho_+[a^1(m),b^1(n)].
\end{eqnarray*}
It follows that $\rho_+$ is a homomorphism of Lie algebras.

(2) Consider the following canonical embeddings of the three-point ring:
$$
\iota_-: \R\to \bC((t^{-1})).
$$
For $f\in \R$, $\iota_{-}f$ is
its expansion as a formal Laurent series in $t^{-1}$. In particular,
\begin{eqnarray*}
\sqrt{p(t)}=t\sqrt{1+\frac{4}{t}} &=&t\sum_{n=0}^\infty \lambda_n{4^n}{t^{-n}}\in\bC((t^{-1})).
\end{eqnarray*}

 For $a,b\in\mg, m,n\in\bZ$, we have

\begin{eqnarray*}
	{[\rho_-(a(m)), \rho_-(b(n))]}&=&[a,b]\ot t^{m+n}+m\lan a,b\ran\de_{m+n,0}\k\\
	&=&\rho_-[a(m),b(n)],
\end{eqnarray*}
\begin{eqnarray*}
{[\rho_-(a(m)), \rho_-(b^1(n))]}&=&[a,b]\ot \sum_{i=0}^\infty {\lambda_i}{4^i} t^{m+n-i+1}+m\lan a,b\ran \Res_t \sum_{i=0}^\infty {\lambda_i}{4^i}t^{m+n-i} \k \\
&=&[a,b]\ot \sum_{i=0}^\infty {\lambda_i}{4^i} t^{m+n-i+1}+m\lan a,b\ran  {\lambda_{m+n+1}}{4^{m+n+1}} \k \\
&=&\rho_-[a(m),b^1(n)],
\end{eqnarray*}
\begin{eqnarray*}
	{[\rho_-(a^1(m)), \rho_-(b^1(n))]}&=&[a,b]\ot \left(\sum_{i=0}^\infty {\lambda_i}{4^i} t^{m-i+1}\right)\left(\sum_{j=0}^\infty {\lambda_j}{4^j} t^{n-j+1}\right)\\
	&&+\lan a,b\ran \Res_t \left(\sqrt{p(t)}t^{m}\right)'\sqrt{p(t)}t^{n} \k\\
	&=&[a,b]\ot (t^2+4t)t^{m+n}+((4m+2)\de_{m+n+2,0}+(m+1)\de_{m+n+2,0})\k\\
	&=&\rho_-[a^1(m),b^1(n)].
\end{eqnarray*}
Then  $\rho_-$ is a homomorphism of Lie algebras.
\end{proof}

\begin{defi}
If $W$ is a $\hat\mg_{p}$-module on which $\k_+$ and $\k_{-}$ act as complex
scalars $\ell_+,\ell_{-}$, respectively, we say that $W$ is of level $(\ell_+,\ell_{-})$.
\end{defi}

Now we state the main result in this section.

\begin{theo}\label{th1} For $\ell_\pm\in \bC$, let $V^\pm$ be irreducible $\hat\mg$-modules of level $\ell_\pm$ in $\E^\pm$. Then
  \begin{itemize}
  \item[(i)] $V^+$ is an irreducible $\hat\mg_p$-module of level $(2\ell_+,0)$, with
  \begin{eqnarray*}
a(n)&\mapsto& a\otimes t^{2n},\\
a^1(n)&\mapsto& 2\sum_{i=0}^\infty \frac{\lambda_i}{4^i}a\otimes t^{2n+2i+1},\\
\k_+&\mapsto& 2\ell_+,\\
\k_-&\mapsto&0.
\end{eqnarray*}
  \item[(ii)] $V^-$ is an irreducible $\hat\mg_p$-module of level $(0,\ell_-)$, with
  \begin{eqnarray*}
a(n)&\mapsto& a\otimes t^{n},\\
a^1(n)&\mapsto& \sum_{i=0}^\infty {\lambda_i}{4^i}a\otimes t^{n-i+1},\\
\k_+&\mapsto& 0,\\
\k_-&\mapsto&\ell_-.
\end{eqnarray*}
  \end{itemize}
\end{theo}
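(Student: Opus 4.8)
The plan is to realise each $V^\pm$ first as a module over the larger algebra $\tilde\mg^\pm$ and then to pull the action back along the homomorphism $\rho_\pm$ of Lemma \ref{emb}; the scalars by which $\k_+,\k_-$ act are then read off directly from $\rho_\pm(\k_+),\rho_\pm(\k_-)$, giving levels $(2\ell_+,0)$ and $(0,\ell_-)$ respectively, and the action maps are exactly those asserted.

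First I would show that the truncation condition defining $\E^\pm$ is precisely what is needed to extend the $\hat\mg$-action on $V^\pm$ to an action of $\tilde\mg^\pm=\mg\ot\bC((t^{\pm1}))\oplus\bC\k$. For $f=\sum_n c_n t^n\in\bC((t^{\pm1}))$ and $w\in V^\pm$ one sets $(a\ot f)w=\sum_n c_n\,a(n)w$; since the support of $f$ is bounded below (resp. above) and $a(n)w=0$ for $n$ sufficiently large (resp. small), this is a finite sum. Checking the bracket relation reduces to a formal computation: expanding $[a(m),b(n)]$ by the relations of $\hat\mg$ and rearranging the double sum (finite when applied to $w$), the coefficient of $t^k$ in $fg$ reproduces the structure term $[a,b]\ot fg$, while the identity $\Res_t(f'g)=\sum_m m\,c_m d_{-m}$ reproduces the central term $\lan a,b\ran\Res_t(f'g)\k$. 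This is routine and I would indicate only the matching of the central contributions.

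With $V^\pm$ now a $\tilde\mg^\pm$-module, Lemma \ref{emb} makes $V^\pm$ a $\hat\mg_p$-module with the stated action maps, so it remains to prove irreducibility. For $V^-$ this is immediate: since $\rho_-(a(n))=a\ot t^{n}$ for every $n\in\bZ$, any nonzero $\hat\mg_p$-submodule is stable under all the original operators $a(n)$, hence is an $\hat\mg$-submodule, and therefore equals $V^-$.

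The main point is the irreducibility of $V^+$, where $\rho_+(a(n))=a\ot t^{2n}$ supplies only the even powers of $t$. The key observation is that $\rho_+(a^1(n))=a\ot t^{2n+1}\phi(t^2)$ with $\phi(t^2)=2\sum_{i\ge0}\frac{\lambda_i}{4^i}t^{2i}=2\sqrt{1+t^2/4}$, an invertible element of $\bC[[t^2]]$ with nonzero constant term $2$. Writing $\phi(t^2)^{-1}=\sum_{j\ge0}\nu_j t^{2j}$, one obtains the formal identity $a\ot t^{2n+1}=\sum_{j\ge0}\nu_j\,\rho_+(a^1(n+j))$ inside $\tilde\mg^+$. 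Applied to any $w\in V^+$ the right-hand side is a finite sum, since each $\rho_+(a^1(n+j))$ raises the power of $t$ and $V^+\in\E^+$; hence each odd operator $a(2n+1)$ preserves any given $\hat\mg_p$-submodule. Together with the even operators this shows such a submodule is an $\hat\mg$-submodule, and irreducibility of $V^+$ over $\hat\mg$ finishes the argument. The one delicate step, which I expect to be the crux, is justifying this power-series inversion at the operator level, i.e. verifying that $\sum_{j}\nu_j\rho_+(a^1(n+j))$ acts as a well-defined operator on each vector and reproduces $a(2n+1)$ exactly; this is where the smoothness hypothesis $\E^+$ is indispensable.
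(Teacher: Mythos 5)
Your proposal is correct and takes essentially the same route as the paper: extend $V^\pm$ to a $\tilde\mg^\pm$-module using the restrictedness condition defining $\E^\pm$, pull the action back along $\rho_\pm$ of Lemma \ref{emb} to read off the levels $(2\ell_+,0)$ and $(0,\ell_-)$, observe that irreducibility of $V^-$ is immediate because $\rho_-$ supplies every $a(n)$, and recover the odd-mode operators $a(2n+1)$ on $V^+$ by inverting the unit power series $2\sum_{i\ge 0}\frac{\lambda_i}{4^i}t^{2i}$. The only cosmetic difference is that you work with the full formal inverse $\phi(t^2)^{-1}$ applied vectorwise (with finiteness guaranteed by $\E^+$), whereas the paper truncates that inverse to a Laurent polynomial $q(t)$ depending on the given vector $v$; these are the same mechanism.
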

\begin{proof}
(i) Suppose that $V$ is an irreducible $\hat\mg$-module of level $\ell_+$ in $\E^+$, then it is  is a natural $\tilde\mg$-module of level $(\ell_+,0)$. It follows that $V$ is a $\hat\mg_p$-module of level $(2\ell_+,0)$.

For any $n\in \bZ$ and $a\in\mg$, we have $a(2n)\in\hat\mg_p$.
For any $v\in V$, there exists $N$ such that $a(n)v=0$ for $a\in\mg$ and $n>N$. It follows that there exists a polynomial $q(t)\in\bC[t,t^{-1}]$ such that
$$
2q(t^2)\sum_{i=0}^\infty \frac{\lambda_i}{4^i}t^{2n+2i+1}=t^{2n+1}+\sum_{i>N}d_it^i,
$$
and
$$
a(2n+1)v=\rho_{+}(a^1\otimes q(t)t^n)\cdot v.
$$
Hence $V$ remains irreducible as a $\hat\mg_p$-module.

(ii) Let $V$ be an irreducible $\hat\mg$-module of level $\ell_{-}$ in $\E^-$. Then  $V$ is a  $\tilde\mg$-module of level $(0,\ell_{-})$. By Lemma \ref{emb},  $V$ is a $\hat\mg_p$-module of level $(0,\ell_{-})$. Since $\hat\mg\hookrightarrow \hat\mg_p$,  $V$ remains irreducible as a $\hat\mg_p$-module.
\end{proof}

Next we present vertex  representations  for the three-point affine algebras in terms of  vertex operators. We will restrict ourselves to the type $A_1$. For arbitrary type $A,D,E$  it is straightforward.

Let $Q=\bZ\al $ be the root
lattice associated to $\mg=\sl(2)$ with the Chevalley basis $\{x_+,x_-, h\}$. In addition, we normalize the invariant form $\lan\cdot ,\cdot \ran$ so that $\lan\al,\al\ran=2$. Consider the associated Heisenberg algebra  $\hat\mh=\bigoplus_{n\in\bZ}\bC \al(m)\oplus\k$ with
the bracket relations given by
\begin{eqnarray*}
[\al(m),\al(n)]=2m\delta_{m+n,0}\k,\quad [\hat\mh,\k]=0.
\end{eqnarray*}

The polynomial ring $P=\bC[y_i\mid i=1,2,\cdots]$ can be equipped with the structure of an $\hat\mh$-module by defining
$$
\al(0)f=0,\quad \al(n)=2n\frac{\pa}{\pa y_n}(f),\quad \al(-n)f=y_nf,\quad\k f=f,\quad\forall f\in P, n>0,
$$
yielding the so-called Fock space representation of $\hat\mh$.

Let $\bC[Q]=\oplus_{n\in\bZ}\bC e_{n\al}$ denote the group algebra associated to $Q$.
On $V_{Q}=P\otimes\bC[Q]$, we define the following operators by
\begin{eqnarray*}
\alpha(-m)\cdot(f\otimes e_{n\alpha})&=&y_{m}f\otimes e_{n\alpha},\\
\alpha(m)\cdot(f\otimes e_{n\alpha})&=&2m\frac{\partial}{\partial y_m}(f)\otimes e_{n\alpha},\\
\alpha(0)\cdot(f\otimes e_{n\alpha})&=&2nf\otimes e_{n\alpha},\\
e^{k\alpha}\cdot(f\otimes e_{n\alpha})&=&f\otimes e_{(k+n)\alpha},\\
\k(f\otimes e_{n\alpha})&=&f\otimes e_{n\alpha},
\end{eqnarray*}
for all $f\in P,  n,k\in\bZ$ and $m\in\bZ_+$, and
\begin{eqnarray*}
z^{k\alpha}(v\otimes e^{n\alpha})&=&z^{2kn}v\otimes e_{n\alpha},\\
X(\pm\alpha,z)&=&\exp\Big(\mp\sum_{n=1}^\infty\frac{\alpha(-n)}{n}z^{n}\Big)
\exp\Big(-\sum_{n=1}^\infty\frac{\alpha(n)}{n}z^{-n}\Big)e_{\pm\alpha}z^{\pm\alpha+1}\\
&=&\sum_{n\in\bZ}X(\pm\alpha)_nz^{-n},\end{eqnarray*}
where $X(\pm\alpha)_n\in \End (V_{Q})$. It is known that  $V_{Q}$ can be equipped with the structure of a highest weight $\hat\sl(2)$-module of level $1$ by defining
\begin{eqnarray*}
&&x_\pm(n)\mapsto X(\pm\alpha)_n,\quad h(n)\mapsto\alpha(n),\quad\k\mapsto\id_{V_{Q}},
\end{eqnarray*}
yielding the so-called vertex operator representation of $\hat\sl(2)$ (cf. \cite{LL03}). Similarly,  $V_{Q}^-=V_{Q}$ is  a lowest weight $\hat\sl(2)$-module of level $-1$ by defining
\begin{eqnarray*}
&&x_\pm(n)\mapsto X(\mp\alpha)_{-n},\quad h(n)\mapsto -\alpha(-n),\quad\k\mapsto -\id_{V_{Q}}.
\end{eqnarray*}

\begin{cor}
\begin{itemize}
\item[(i)]$V_{Q}$ is a module of the three-point affine algebra $\sl(2)_p$  given by
\begin{eqnarray*}
x_\pm(n)&\mapsto& X(\pm\alpha)_{2n},\\
h(n)&\mapsto&\alpha(2n),\\
x_\pm^1(n)&\mapsto& 2\sum_{i=0}^\infty \frac{\lambda_i}{4^i}X(\pm\alpha)_{2n+2i+1},\\
h^1(n)&\mapsto& 2\sum_{i=0}^\infty \frac{\lambda_i}{4^i}\alpha(2n+2i+1),\\
\k_+&\mapsto& 2\id_{V(Q)},\\
\k_-&\mapsto& 0.
\end{eqnarray*}
\item[(ii)]$V_{Q}^-$ is a module of the three-point affine algebra $\sl(2)_p$ given by
\begin{eqnarray*}
x_\pm(n)&\mapsto& X(\mp\alpha)_{-n},\\
h(n)&\mapsto&-\alpha(-n),\\
x_\pm^1(n)&\mapsto& \sum_{i=0}^\infty {\lambda_i}{4^i}X(\mp\alpha)_{-n+i-1},\\
h^1(n)&\mapsto& \sum_{i=0}^\infty {\lambda_i}{4^i}\alpha(-n+i-1),\\
\k_+&\mapsto& 0,\\
\k_-&\mapsto& -\id_{V(Q)}.
\end{eqnarray*}
\end{itemize}
\end{cor}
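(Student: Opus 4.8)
The plan is to read this corollary as a direct specialization of Theorem \ref{th1} to the two explicit vertex operator modules $V_{Q}$ and $V_{Q}^-$, rather than as a fresh computation: once the vertex operator constructions are accepted as $\hat\sl(2)$-modules, no bracket relation of $\hat\mg_p$ needs to be checked again, since all such relations are already guaranteed by the embeddings $\rho_\pm$ of Lemma \ref{emb}. Thus the entire content reduces to (a) confirming that $V_{Q}$ and $V_{Q}^-$ lie in the categories $\E^+$ and $\E^-$ respectively, and (b) composing $\rho_\pm$ with the stated $\hat\sl(2)$-actions and reading off the resulting operators.

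First I would verify the categorical hypotheses. For part (i), I claim $V_{Q}=P\otimes\bC[Q]$ lies in $\E^+$ at level $\ell_+=1$. Indeed $V_{Q}$ is the standard positive-energy module: it is $\bZ_{\geq 0}$-graded with finite-dimensional homogeneous components (for instance with $\deg y_n=n$), and under the highest weight action $h(n)\mapsto\alpha(n)=2n\,\partial/\partial y_n$ and $x_\pm(n)\mapsto X(\pm\alpha)_n$ each positive mode $a(n)$ strictly lowers this grading by $n$. Hence for any fixed $w=f\otimes e_{m\alpha}$ and any $a\in\sl(2)$ we have $a(n)w=0$ once $n$ exceeds the degree of $w$, which is exactly the defining condition of $\E^+$; the level is $1$ because $\k\mapsto\id$. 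Symmetrically, the lowest weight realization sends $h(n)\mapsto-\alpha(-n)$ and $x_\pm(n)\mapsto X(\mp\alpha)_{-n}$, so the same grading argument (now with $a(n)$ of degree $+n$) shows that these operators annihilate any fixed vector once $n$ is sufficiently small, giving $V_{Q}^-\in\E^-$ at level $\ell_-=-1$ since $\k\mapsto-\id$.

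With membership established, Theorem \ref{th1}(i) applies to $V_{Q}$ and Theorem \ref{th1}(ii) to $V_{Q}^-$, equipping each with an $\sl(2)_p$-module structure through $\rho_+$ and $\rho_-$ respectively, and it remains only to substitute the explicit vertex operators. For (i), the identity $\rho_+(a(n))=a\otimes t^{2n}$ matches the $n$-th three-point mode with the $2n$-th $\hat\sl(2)$-mode, giving $x_\pm(n)\mapsto X(\pm\alpha)_{2n}$ and $h(n)\mapsto\alpha(2n)$; likewise $\rho_+(a^1(n))=2\sum_{i\geq0}\frac{\lambda_i}{4^i}\,a\otimes t^{2n+2i+1}$ produces the stated series for $x_\pm^1(n)$ and $h^1(n)$, while $\k_+\mapsto 2\ell_+=2\,\id$ and $\k_-\mapsto0$. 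For (ii), the analogous substitution into $\rho_-(a(n))=a\otimes t^{n}$ and $\rho_-(a^1(n))=\sum_{i\geq0}\lambda_i 4^i\,a\otimes t^{n-i+1}$, combined with $x_\pm(n)\mapsto X(\mp\alpha)_{-n}$ and $h(n)\mapsto-\alpha(-n)$, yields the displayed modes $X(\mp\alpha)_{-n+i-1}$ and $\alpha(-n+i-1)$, with $\k_+\mapsto0$ and $\k_-\mapsto\ell_-=-\id$.

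The step I expect to require the most care is not any algebraic identity---those are furnished by Lemma \ref{emb}---but the index and sign bookkeeping in part (ii). There the exponent $n-i+1$ carried by $\rho_-$ must be combined with the \emph{reflection} of the mode index built into the lowest weight realization (which sends mode $k$ to $X(\mp\alpha)_{-k}$ and $h(k)$ to $-\alpha(-k)$), so that one lands correctly on $-n+i-1$ and settles the overall sign of the $h^1(n)$ term. This is the only place where a miscount would alter the answer, so I would track it explicitly; the $x_\pm^1(n)$ computation provides a clean cross-check, since there the reflection acts on $X(\mp\alpha)$ without an accompanying sign.
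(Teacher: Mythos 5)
Your overall strategy coincides with the paper's: the paper gives no separate proof of this corollary, presenting it as the immediate specialization of Theorem \ref{th1} (equivalently, of the homomorphisms $\rho_\pm$ of Lemma \ref{emb}) to the level $1$ highest weight module $V_Q\in\E^+$ and the level $-1$ lowest weight module $V_Q^-\in\E^-$; your verification of the categorical hypotheses via the grading and your substitution of $\rho_\pm$ into the vertex operator actions are exactly the intended steps, and part (i) is correct as you state it. (Note also that only the module structure is claimed here, so Lemma \ref{emb} plus membership in $\E^\pm$ suffices and the irreducibility hypothesis of Theorem \ref{th1} is not strictly needed, though $V_Q$ and $V_Q^-$ do satisfy it.)

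There is, however, one unresolved point in part (ii), precisely at the spot you yourself flag as delicate but then do not settle. Carrying out the promised sign-tracking, the lowest weight action sends $h(m)\mapsto-\alpha(-m)$, so
$h^1(n)\;\mapsto\;\sum_{i\ge0}\lambda_i4^i\,\bigl(-\alpha(-(n-i+1))\bigr)\;=\;-\sum_{i\ge0}\lambda_i4^i\,\alpha(-n+i-1)$,
which \emph{disagrees by a sign} with the displayed formula in the corollary that your write-up endorses. This is not a matter of convention that can be absorbed elsewhere: since the displayed actions of $x^1_+(m)$ and $x_-(n)$ are correct, the relation $[x^1_+(m),x_-(n)]=h^1(m+n)+(\text{central term})$ in $\hat\mg_p$ forces the action of $h^1$ to carry the minus sign, so the assignment as literally printed in (ii) does not define a module. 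In short, your method is sound and, executed carefully, exposes a sign typo in the paper's statement (the $h^1(n)$ line of (ii) should read $-\sum_{i\ge0}\lambda_i4^i\alpha(-n+i-1)$); but by asserting that the composition ``yields the displayed modes'' without performing the check you announced, your proposal leaves the one genuinely nontrivial computation of the argument incomplete, and as written it certifies an inconsistent formula.
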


\section{A Fock module over certain quotient of the three-point Virasoro  algebra of genus zero}
In this section,  we construct a Fock module over certain quotient three-point Virasoro algebra of genus zero.

\begin{defi}[\cite{CGLZ14},\cite{CJM16},\cite{S2017}]The  three-point Witt algebra of genus zero is the derivation Lie algebra of the three-point ring $\R$, given by
$$
\W_p=\Der\,\R=\bigoplus_{n\in\bZ}\bC d_n\oplus \bigoplus_{n\in\bZ}\bC e_n
$$
with the bracket relations:
\begin{eqnarray*}
[d_m,d_n]&=&(m-n)\left(4d_{m+n+1}+d_{m+n+2}\right),\\
{[d_m,e_n]}&=&(4m-4n+2)e_{m+n+1}+(m-n+1)e_{m+n+2},\\
{[e_m,e_n]}&=&(m-n)d_{m+n}
\end{eqnarray*}
for $m,n\in\bZ$.
\end{defi}

\begin{defi}\label{QA}\label{3v}Let $\cL_p$ be the one-dimensional central extension of $\W_p$:
$$
\cL_p=\W_p\oplus\bC \c
$$
where $\c$ is central and the nontrivial bracket
relations are written in terms of generating functions  $$
d(z)=\sum d_n z^{-n-2},\quad e(x)=\sum e_n z^{-n-2},
$$
as follows.
\begin{eqnarray*}
{[d(z),d(w)]}&=&(p(x_{2})d(w))'z^{-1}\delta\left(\frac{w}{z}\right)+2p(w)d(w)\frac{\partial}{\partial w}z^{-1}\delta\left(\frac{w}{z}\right)\\
&&+\frac{1}{12}p^{2}(w)\left(\frac{\partial}{\partial w}\right)^{3}z^{-1}\delta\left(\frac{w}{z}\right)\c
+\frac{1}{4}p(w)p'(w)\left(\frac{\partial}{\partial w}\right)^{2}z^{-1}\delta\left(\frac{w}{z}\right)\c\\
&&+\frac{1}{8}p(w)p''(w)\frac{\partial}{\partial w}z^{-1}\delta\left(\frac{w}{z}\right)\c +\frac{1}{16}(p'(w))^2\frac{\partial}{\partial w}z^{-1}\delta\left(\frac{w}{z}\right)\c\\
{[e(z),e(w)]}&=&d'(w)z^{-1}\delta\left(\frac{w}{z}\right)+2d(w)\frac{\partial}{\partial w}z^{-1}\delta\left(\frac{w}{z}\right)\\
&&+p(w)\left(\frac{\partial}{\partial w}\right)^{3}z^{-1}\delta\left(\frac{w}{z}\right)\c+\frac{3}{2}p'(w)\left(\frac{\partial}{\partial w}\right)^{2}z^{-1}\delta\left(\frac{w}{z}\right)\c,\\
{[d(z),e(w)]}&=&(p(w)e(w))'z^{-1}\de\left(\frac{w}{z}\right)+2p(w)e(w)\frac{\pa}{\pa w}z^{-1}\de\left(\frac{w}{z}\right)\\
&&+\frac{1}{2}p'(w)e(w)z^{-1}\de\left(\frac{w}{z}\right),
\end{eqnarray*}
where $\delta(\frac{w}{z})=\sum_{n\in\bZ}\left(\frac{w}{z}\right)^n$.

\end{defi}

\begin{rem}

The three-point Virasoro algebra of genus zero is the universal central extension of $\W_p$. It has two dimensional center. It is clear that the algebra $\cL_p$ is a quotient  of the three-point Virasoro algebra of genus zero.
\end{rem}

\begin{rem}
The 2-cocycle of $\W_p$  in Definition \ref{QA} is slightly different from one of 2-cocycles obtained by Cox and Jurisich \cite{CJ14}.
\end{rem}
\begin{defi}
If  $W$ is an $\cL_{p}$-module on which $\c$ acts as a complex
scalar $c$, we say that $W$ is of central charge $c$.
\end{defi}

Let $\mathcal{H}=\bC h$ be a 1-dimensional abelian Lie algebra
equipped with the following symmetric bilinear form $\lan\cdot ,\cdot\ran$ such that $\lan h,h\ran=1$. Let $\hat\cH=\cH\ot\bC[t,t^{-1}]\oplus\bC\k$ be the corresponding affine Lie algebra with
$$
[h(m),h(n)]=m\de_{m+n,0}\k,\quad\forall m,n\in\bZ,
$$
where $h(m)=h\ot t^{m}$, and $\k$ is central.

Let $V$ be an $\hat\cH$-module of level $\frac{1}{2}$ in $\E^+$. From Theorem \ref{th1} (i), we have
\begin{eqnarray*}
&&[H(m),H(n)]=m\de_{m+n,0},\\
&&[H^1(m),H^1(n)]=\left((4m+2)\de_{m+n+1,0}+(m+1)\de_{m+n+2,0}\right),\\
&&[H(m),H^1(n)]=[H^1(m),H(n)]=0,
\end{eqnarray*}
for $m,n\in\bZ$, where
$$
H(m)=h(2m),\quad H^1(m)=2\sum_{i=0}^\infty \frac{\lambda_i}{4^i}h(2m+2i+1)\in\End (V).
$$

 Set
$$
H(z)=\sum_{n\in\bZ}H(n)z^{-n-1},\quad H^1(z)=\sum_{n\in\bZ}H^1(n)z^{-n-1}\in (\End V)[[z,z^{-1}]].
$$

 We define the normal-ordering operation $\NO\ \NO$ by
\begin{eqnarray*}
 && \NO h(m)h(n)\NO=\begin{cases}
   h(m)h(n)&m<0,\\
   h(n)h(m)&m\geq0.
  \end{cases}
  \end{eqnarray*}
This induces the following contraction of two fields $X(z)$, $Y(w)$ by
$$
\underbrace{X(z)Y(w)}=X(z)Y(w)-\NO X(z)Y(w)\NO,
$$
for $X,Y\in \{h, H,H^1\}$.

Then we have the following lemma.

\begin{lem}\label{L1}
\begin{eqnarray*}
\underbrace{H(z)H(w)}&=& \frac{1}{(z-w)^2},\\
\underbrace{H^1(z)H^1(w)}&=& \frac{p(w)}{(z-w)^2}+\frac{\frac{1}{2}p'(w)}{z-w}.\end{eqnarray*}
\end{lem}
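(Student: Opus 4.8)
The plan is to compute both contractions directly from the definition, reducing everything to the Heisenberg bracket $[h(k),h(l)]=\frac{k}{2}\de_{k+l,0}$ that holds on $V$ (since $V$ has level $\frac12$, the central element $\k$ acts by $\frac12$). The organizing observation is that for any two fields written out in Heisenberg modes, say $A(z)=\sum_k a_k(z)h(k)$ and $B(w)=\sum_l b_l(w)h(l)$ with scalar coefficients, the normal ordering reorders a product $h(k)h(l)$ precisely when $k\ge 0$; hence
\[
\underbrace{A(z)B(w)}=\sum_{k\ge 0,\,l}a_k(z)b_l(w)\,[h(k),h(l)]=\tfrac12\sum_{k\ge1}k\,a_k(z)b_{-k}(w),
\]
a scalar series. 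Both identities then reduce to reading off the coefficients $a_k,b_l$ for $H$ and $H^1$ and performing a summation.

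For $H(z)=\sum_m h(2m)z^{-m-1}$ only even modes occur, with $a_{2m}(z)=z^{-m-1}$ and $b_{-2m}(w)=w^{m-1}$, so the formula collapses to $\sum_{m\ge1}m\,z^{-m-1}w^{m-1}=\pa_w\frac{1}{z-w}=\frac{1}{(z-w)^2}$ in the region $|z|>|w|$, which is the first claim.

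The second identity is the substantial one. Here the key input is the generating-function identity $2\sum_{i\ge0}\frac{\lambda_i}{4^i}z^i=2(1+\tfrac z4)^{1/2}=\sqrt{z+4}$, immediate from $\lambda_i=\binom{1/2}{i}$. Feeding it into the definition of $H^1$ shows that the coefficient of $h(k)$ (necessarily with $k$ odd) in $H^1(z)$ is $z^{-(k+1)/2}\sqrt{z+4}$, and that of $h(-k)$ in $H^1(w)$ is $w^{(k-1)/2}\sqrt{w+4}$. Writing $k=2r+1$ and summing the geometric-type series $\sum_{r\ge0}(2r+1)z^{-r-1}w^{r}=\frac{z+w}{(z-w)^2}$ gives the closed form
\[
\underbrace{H^1(z)H^1(w)}=\frac12\sqrt{(z+4)(w+4)}\,\frac{z+w}{(z-w)^2}.
\]
I would then isolate the singular part by Taylor expanding $\sqrt{z+4}=\sqrt{w+4}\bigl(1+\frac{z-w}{2(w+4)}+O((z-w)^2)\bigr)$ about $z=w$; the coefficients of $(z-w)^{-2}$ and $(z-w)^{-1}$ come out as $w(w+4)=p(w)$ and $w+2=\frac12 p'(w)$, which is exactly the asserted expression.

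The main obstacle is conceptual rather than computational. Unlike the $H$-contraction, which is purely singular, the $H^1$-contraction produced honestly by the $h$-mode normal ordering carries a nonzero regular tail coming from the dressing $\sqrt{(z+4)(w+4)}$ (for instance its $z^0w^0$ coefficient is $\frac14$), so the Lemma must be read as an equality of the singular parts of the operator product --- equivalently, as the statement governing the commutator $[H^1(z),H^1(w)]$. The delicate step is thus the re-expansion of the square roots about $z=w$ needed to extract precisely the $(z-w)^{-2}$ and $(z-w)^{-1}$ coefficients. As an independent check I would recompute these two coefficients from the mode brackets $[H^1(m),H^1(n)]=(4m+2)\de_{m+n+1,0}+(m+1)\de_{m+n+2,0}$ recorded after Theorem \ref{th1}, which reproduce $p(w)$ and $\frac12 p'(w)$ and so confirm the singular OPE.
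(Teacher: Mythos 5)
Your computation is correct, and it is in substance the calculation the paper compresses into ``It is straightforward'': expand $H$ and $H^1$ in the modes $h(k)$, use $[h(k),h(l)]=\frac{k}{2}\de_{k+l,0}$ on a level-$\frac12$ module, and resum via $2\sum_{i\ge0}\lambda_i(z/4)^i=\sqrt{z+4}$, the same expansion the paper records in the proof of Lemma \ref{emb}. Your closed form $\underbrace{H^1(z)H^1(w)}=\frac12\sqrt{(z+4)(w+4)}\,\frac{z+w}{(z-w)^2}$ (expanded in $|z|>|w|$), the summation $\sum_{r\ge0}(2r+1)z^{-r-1}w^r=\frac{z+w}{(z-w)^2}$, and the Taylor re-expansion about $z=w$ producing the coefficients $p(w)=w(w+4)$ and $\frac12p'(w)=w+2$ all check out, as does the easier $H$--$H$ contraction.

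Where you go beyond the paper is the observation about the regular tail, and it is a genuine point, not a flaw in your argument: with the contraction induced termwise from the $h$-mode normal ordering (the literal reading of the paper's definition), $\underbrace{H^1(z)H^1(w)}$ does carry a nonzero regular part --- your $z^0w^0$ coefficient $\frac14$ is correct, and e.g.\ the $z^1w^0$ coefficient is $-\frac{1}{64}$ --- so the second identity of the Lemma is literally true only for the singular part, equivalently at the level of the commutator $[H^1(z),H^1(w)]=p(w)\partial_w z^{-1}\de(w/z)+\frac12p'(w)z^{-1}\de(w/z)$; since the kernel $\frac12\sqrt{(z+4)(w+4)}(z+w)$ is symmetric in $z,w$, the regular parts cancel in the commutator, and that is all Lemmas \ref{L2}--\ref{L5} and the theorem use through the quoted OPE/commutator correspondence from Kac. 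One remark reconciles the Lemma as an exact identity: if instead one normal-orders the capital modes $H^1(m)$ themselves (reordering when $m\ge0$), the contraction becomes $\sum_{m\ge0}\bigl((4m+2)w^m+(m+1)w^{m+1}\bigr)z^{-m-1}=\frac{4w+wz}{(z-w)^2}+\frac{2}{z-w}=\frac{p(w)}{(z-w)^2}+\frac{\frac12p'(w)}{z-w}$ on the nose, with no tail; your independent check via the brackets $[H^1(m),H^1(n)]$ is precisely this computation. Under either convention the representation in the theorem is unaffected, so your proof is complete and, if anything, more careful than the paper's.
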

\begin{proof}
It is straightforward.	
\end{proof}

Set
\begin{eqnarray*}
D(z)&=& \frac{1}{2}\left(\NO H^1(z)H^1(z)\NO+p(z)\NO H(z)H(z)\NO\right),\quad E(z)= \NO H^1(z)H(z)\NO.
\end{eqnarray*}

The following lemmas  give the
operator product expansions of normally ordered products of $D(z)$,$E(z)$, $H(z)$, and $H^1(z)$.

\begin{lem}\label{L2}
\begin{eqnarray*}
 \underbrace{D(z)H(w)}&=&\frac{p(w)H(w)}{(z-w)^2}+\frac{p'(w)H(w)+p(w)a'(w)}{z-w},\\
 \underbrace{D(z)H^1(w)}&=&\frac{p(w)H^1(w)}{(z-w)^2}+\frac{p(w)(H^1(w))'+\frac{1}{2}H^1(w)p'(w)}{z-w},\\
 \underbrace{E(z)H(w)}&=&\frac{H^1(w)}{(z-w)^2}+ \frac{(H^1(w))'}{z-w},\\
 \underbrace{E(z)H^1(w)}&=&\frac{p(w)H(w)}{(z-w)^2}+\frac{p(w)a'(w)}{z-w}+\frac{\frac{1}{2}p'(w)H(w)}{z-w}.
\end{eqnarray*}

\end{lem}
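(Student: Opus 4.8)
The plan is to treat $H(z)$ and $H^1(z)$ as mutually commuting free fields and to extract every operator product expansion by Wick's theorem, so that each OPE reduces to a sum of single contractions whose values are exactly those recorded in Lemma \ref{L1}. The only additional input needed is the vanishing of the mixed contraction: since the relations stated just before the lemma give $[H(m),H^1(n)]=[H^1(m),H(n)]=0$, we have $\underbrace{H(z)H^1(w)}=\underbrace{H^1(z)H(w)}=0$. This is what makes the computation short, because in each of the four cases exactly one of the two Wick contractions survives.

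Concretely, for a normally ordered quadratic field $\NO A(z)B(z)\NO$ and a linear field $C(w)$, Wick's theorem gives
$$\underbrace{\NO A(z)B(z)\NO C(w)}=\NO A(z)\,\underbrace{B(z)C(w)}\,\NO+\NO\underbrace{A(z)C(w)}\,B(z)\NO,$$
in which each single contraction is the $c$-number function from Lemma \ref{L1}. First I would apply this to $\underbrace{D(z)H(w)}$: only the $p(z)\NO H(z)H(z)\NO$ summand of $D(z)$ can contract with $H(w)$, the $\NO H^1(z)H^1(z)\NO$ summand vanishing by the mixed contraction rule, and the factor $\frac{1}{2}$ is cancelled by the two identical Wick terms, producing $p(z)H(z)(z-w)^{-2}$. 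Taylor-expanding $p(z)H(z)$ about $w$ and reading off the coefficients of $(z-w)^{-2}$ and $(z-w)^{-1}$ yields the first formula, the single-pole term being $\partial_w\!\bigl(p(w)H(w)\bigr)=p'(w)H(w)+p(w)H'(w)$. The same recipe handles the other three: for $\underbrace{D(z)H^1(w)}$ only the $\NO H^1(z)H^1(z)\NO$ term survives and one feeds in the second contraction of Lemma \ref{L1}; for $\underbrace{E(z)H(w)}$ and $\underbrace{E(z)H^1(w)}$, with $E(z)=\NO H^1(z)H(z)\NO$, the field $H(w)$ (respectively $H^1(w)$) contracts only with the $H(z)$ (respectively $H^1(z)$) factor, leaving $H^1(z)(z-w)^{-2}$ (respectively $(p(w)(z-w)^{-2}+\frac12 p'(w)(z-w)^{-1})H(z)$), which is then re-expanded about $w$.

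The computation is entirely mechanical, so the only genuine care lies in bookkeeping, and I expect the one place a slip is easy to be the single-pole coefficients. In $\underbrace{D(z)H(w)}$ the surviving term carries $p(z)$ evaluated at $z$, so one must Taylor-expand the \emph{product} $p(z)H(z)$ about $w$ rather than $H(z)$ alone, and it is precisely this that generates the $p'(w)H(w)$ contribution. Likewise, in $\underbrace{D(z)H^1(w)}$ and $\underbrace{E(z)H^1(w)}$ the single pole also receives the $\frac12 p'(w)$ piece already sitting inside the contraction $\underbrace{H^1(z)H^1(w)}$ of Lemma \ref{L1}. Matching the $(z-w)^{-2}$ and $(z-w)^{-1}$ coefficients against the stated right-hand sides (reading the symbol $a'(w)$ there as $H'(w)$) then completes the proof.
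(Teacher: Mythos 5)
Your proof is correct and takes essentially the same route as the paper: the paper's proof also applies Wick's theorem, feeding in the two contractions of Lemma \ref{L1} together with the vanishing mixed contraction $\underbrace{H(z)H^1(w)}=\underbrace{H^1(z)H(w)}=0$ (the modes commute, as $H$ involves only even and $H^1$ only odd modes of $h$), and then Taylor-expands the surviving field factors about $w$ to read off the double- and single-pole coefficients. Your observation that the paper's symbol $a'(w)$ should be read as $H'(w)$ is also the correct interpretation of that typographical slip.
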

\begin{proof} Using the well-known Wick's theorem \cite{Ka97}, we have
\begin{eqnarray*}
 \underbrace{D(z)H(w)}&=&\frac{1}{2}\underbrace{\NO H^1(z)H^1(z)\NO H(w)}+\frac{1}{2}p(z)\underbrace{\NO H(z)H(z)\NO H(w)}\\
 &=&\frac{p(w)H(w)}{(z-w)^2}+\frac{p'(w)H(w)+p(w)a'(w)}{z-w},
 \end{eqnarray*}

\begin{eqnarray*}
 \underbrace{D(z)H^1(w)}&=&\frac{1}{2}\underbrace{\NO H^1(z)H^1(z)\NO H^1(w)}+\frac{1}{2}p(z)\underbrace{\NO H(z)H(z)\NO H^1(w)}\\
 &=&\frac{p(w)H^1(w)}{(z-w)^2}+\frac{p(w)(H^1(w))'+\frac{1}{2}H^1(w)p'(w)}{z-w},
 \end{eqnarray*}

\begin{eqnarray*}
 \underbrace{E(z)H(w)}&=&\underbrace{\NO H^1(z)H(z)\NO H(w)}\\
 &=&\frac{H^1(w)}{(z-w)^2}+ \frac{(H^1(w))'}{z-w},
\end{eqnarray*}

\begin{eqnarray*}
\underbrace{E(z)H^1(w)} &=&\underbrace{\NO H^1(z)H(z)\NO H^1(w)}\\
 &=&\frac{p(w)H(w)}{(z-w)^2}+\frac{p(w)a'(w)}{z-w}+\frac{\frac{1}{2}p'(w)H(w)}{z-w}.
\end{eqnarray*}

\end{proof}

\begin{lem}\label{L3}
\begin{eqnarray*}
 \underbrace{D(z)D(w)}&=&\frac{p^2(w)}{(z-w)^4}+\frac{p(w)p'(w)}{(z-w)^3}+\frac{\frac{1}{4}p(w)p''(w)+\frac{1}{8}p'^2(w)}{(z-w)^2}
 +\frac{\frac{1}{12}p(w)p'''(x)}{z-w}\\
 &&+\frac{2p(w)D(w)}{(z-w)^2}+\frac{p'(w)D(w)+p(w)D'(w)}{z-w}.
\end{eqnarray*}
\end{lem}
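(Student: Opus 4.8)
The plan is to compute $\underbrace{D(z)D(w)}$ directly by Wick's theorem \cite{Ka97}, writing $D(z)=\frac12\NO H^1(z)H^1(z)\NO+\frac12 p(z)\NO H(z)H(z)\NO$ as a sum of two Wick monomials and sorting all pairings into \emph{single} contractions, in which one field of $D(z)$ is paired with one field of $D(w)$, and \emph{double} contractions, in which both fields are paired. The single contractions will reproduce the field-valued terms proportional to $D(w)$ and $D'(w)$, while the double contractions will produce the $\c$-number (central) terms. The inputs are Lemma \ref{L1} (the basic contractions of $H$ and $H^1$, together with $\underbrace{H(z)H^1(w)}=0$, which follows from $[H(m),H^1(n)]=0$) and Lemma \ref{L2} (the contractions of $D(z)$ with $H(w)$ and $H^1(w)$).

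First I would treat the single contractions by feeding $\underbrace{D(z)H(w)}$ and $\underbrace{D(z)H^1(w)}$ from Lemma \ref{L2} into each factor of the two Wick monomials defining $D(w)$. Since each monomial is a square, it contributes twice its single contraction, normally ordered against the surviving field. The resulting simple-pole residues contain terms $\NO X'(w)X(w)\NO$, which I rewrite using $\NO X'X\NO=\tfrac12\partial_w\NO XX\NO$. Collecting the double-pole part then reassembles $p(w)\NO H^1H^1\NO(w)+p^2(w)\NO HH\NO(w)=2p(w)D(w)$, and the simple-pole part reassembles $p'(w)D(w)+p(w)D'(w)$, giving the last two terms of the claimed formula.

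Next I would treat the double contractions using Lemma \ref{L1}. The cross terms between the $H^1$-monomial and the $H$-monomial vanish because $\underbrace{H(z)H^1(w)}=0$. Each surviving double contraction carries a combinatorial factor $2$ from the two possible pairings, so the $H^1$-$H^1$ block contributes $\tfrac12\bigl[\underbrace{H^1(z)H^1(w)}\bigr]^2$ and the $H$-$H$ block contributes $\tfrac12 p(z)p(w)\bigl[\underbrace{H(z)H(w)}\bigr]^2$. Squaring $\frac{p(w)}{(z-w)^2}+\frac{p'(w)/2}{z-w}$ produces poles of orders $4,3,2$, whereas the $H$-$H$ block produces a single $(z-w)^{-4}$ pole weighted by $p(z)p(w)$.

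The main obstacle, and the step demanding the most care, is the bookkeeping in the $H$-$H$ double contraction: the coefficient $p(z)$ sits at the point $z$ and must be Taylor expanded around $w$, namely $p(z)=p(w)+p'(w)(z-w)+\tfrac12 p''(w)(z-w)^2+\tfrac16 p'''(w)(z-w)^3+\cdots$, so that the single $(z-w)^{-4}$ pole redistributes across all four pole orders. Adding this to the $H^1$-$H^1$ contribution, the $(z-w)^{-4}$ and $(z-w)^{-3}$ coefficients combine to $p^2(w)$ and $p(w)p'(w)$; at order $(z-w)^{-2}$ the term $\tfrac18 p'^2(w)$ from the $H^1$-block and $\tfrac14 p(w)p''(w)$ from the $H$-block add; and at order $(z-w)^{-1}$ only $\tfrac1{12}p(w)p'''(w)$ survives. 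Verifying that these coefficients land exactly as stated is the crux of the computation, and it also accounts for the normalization $\tfrac12$ built into the definition of $D(z)$.
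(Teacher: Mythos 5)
Your proposal is correct and follows essentially the same route as the paper's proof: both split $D$ into its two Wick monomials, discard the $H$--$H^1$ cross contractions, obtain the central terms from the double contractions of Lemma \ref{L1} with the Taylor expansion $p(z)=p(w)+p'(w)(z-w)+\tfrac12 p''(w)(z-w)^2+\tfrac16 p'''(w)(z-w)^3+\cdots$ redistributing the $H$-block's $(z-w)^{-4}$ pole, and reassemble the field-valued terms into $2p(w)D(w)$ and $p'(w)D(w)+p(w)D'(w)$ via $\NO X'X\NO=\tfrac12\partial_w\NO XX\NO$. The only (harmless) organizational difference is that you route the single-contraction bookkeeping through Lemma \ref{L2}, whereas the paper recomputes those terms directly from the four-field Wick expansion; the numerical factors you track (the factor $2$ from each squared monomial, the $\tfrac12$ in front of each double contraction) all check out.
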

\begin{proof}

\begin{eqnarray*}
 &&\frac{1}{4}\underbrace{\NO H^1(z)H^1(z)\NO \NO H^1(w)H^1(w)\NO}\\
 &=&\frac{2}{4}\left(\frac{p(w)}{(z-w)^2}+\frac{\frac{1}{2}p'(w)}{z-w}\right)^2
 +\left(\frac{p(w)}{(z-w)^2}+\frac{\frac{1}{2}p'(w)}{z-w}\right)\NO H^1(z)H^1(w)\NO\\
 &=&\frac{\frac{1}{2}p^2(w)}{(z-w)^4}+\frac{\frac{1}{2}p(w)p'(w)}{(z-w)^3}+\frac{\frac{1}{8}p'^2(w)}{(z-w)^2}\\
 &&+\frac{p(w)\NO H^1(w)H^1(w)\NO}{(z-w)^2}+\frac{\frac{1}{2}p'(w)\NO H^1(w)H^1(w)\NO+p(w)\NO H^1(w)(H^1(w))'\NO}{z-w}.
\end{eqnarray*}

\begin{eqnarray*}
 &&\frac{1}{4}p(z)p(w)\underbrace{\NO H(z)H(z)\NO \NO H(w)H(w)\NO}\\
 &=&\frac{1}{2}\frac{p(z)p(w)}{(z-w)^4}
 +\frac{p(z)p(w)}{(z-w)^2}\NO H(z)H(w)\NO\\
 &=&\frac{1}{2}\frac{p(w)}{(z-w)^4}\left(p(w)+p'(w)(z-w)+\frac{p''(w)}{2}(z-w)^2+\frac{p'''(w)}{6}(z-w)^3+\cdots\right)\\
&& +\frac{p(w)}{(z-w)^2}\NO \left(p(w)H(w)+(p(w)H(w))'(z-w)+\cdots\right)H(w)\NO\\
 &=&\frac{\frac{1}{2}p^2(w)}{(z-w)^4}+\frac{\frac{1}{2}p(w)p'(w)}{(z-w)^3}+\frac{\frac{1}{4}p(w)p''(w)}{(z-w)^2}
 +\frac{\frac{1}{12}p(w)p'''(x)}{z-w}\\
 &&+\frac{p^2(w)\NO H(w)H(w)\NO}{(z-w)^2}+\frac{p(w)p'(w)\NO H(w)H(w)\NO+p^2(w)\NO H(w)H'(w)\NO}{z-w}.
\end{eqnarray*}

\begin{eqnarray*}
 &&D(z)D(w)\\
 &=& \frac{1}{4}\underbrace{\NO H^1(z)H^1(z)\NO \NO H^1(w)H^1(w)\NO}+\frac{1}{4}p(z)p(w)\underbrace{\NO H(z)H(z)\NO \NO H(w)H(w)\NO}\\
 &=&\frac{p^2(w)}{(z-w)^4}+\frac{p(w)p'(w)}{(z-w)^3}+\frac{\frac{1}{4}p(w)p''(w)+\frac{1}{8}p'^2(w)}{(z-w)^2}
 +\frac{\frac{1}{12}p(w)p'''(x)}{z-w}\\
 &&+\frac{2p(w)\left(\NO \frac{1}{2}H^1(w)H^1(w)+\frac{1}{2}p(w)\NO H(w)H(w)\NO\right)}{(z-w)^2}\\
 &&+\frac{p'(w)\left(\frac{1}{2}\NO H^1(w)H^1(w)\NO+\frac{1}{2}p(w)\NO H(w)H(w)\NO\right)}{z-w}\\
 &&+\frac{p(w)\left(\frac{1}{2}\NO H^1(w)H^1(w)\NO+\frac{1}{2}p(w)\NO H(w)H(w)\NO\right)'}{z-w}\\
 &=&\frac{p^2(w)}{(z-w)^4}+\frac{p(w)p'(w)}{(z-w)^3}+\frac{\frac{1}{4}p(w)p''(w)+\frac{1}{8}p'^2(w)}{(z-w)^2}
 +\frac{\frac{1}{12}p(w)p'''(x)}{z-w}\\
 &&+\frac{2p(w)D(w)}{(z-w)^2}+\frac{p'(w)D(w)+p(w)D'(w)}{z-w}.
\end{eqnarray*}

\end{proof}

\begin{lem}\label{L4}
\begin{eqnarray*}
 \underbrace{D(z)E(w)}&=&\frac{2p(w)E(w)}{(z-w)^2}+\frac{\frac{3}{2}p'(w)E(w)+p(w)e'(w)}{z-w}.
\end{eqnarray*}
\end{lem}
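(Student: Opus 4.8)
The plan is to follow the same Wick-theorem strategy already used in Lemmas \ref{L2} and \ref{L3}. Writing $D(z)=\frac12\NO H^1(z)H^1(z)\NO+\frac12 p(z)\NO H(z)H(z)\NO$ and $E(w)=\NO H^1(w)H(w)\NO$, I would split the contraction into the two quartic pieces
$$
\underbrace{D(z)E(w)}=\tfrac12\underbrace{\NO H^1(z)H^1(z)\NO\,\NO H^1(w)H(w)\NO}+\tfrac12 p(z)\underbrace{\NO H(z)H(z)\NO\,\NO H^1(w)H(w)\NO}
$$
and treat each separately. The only inputs beyond the definitions are Lemma \ref{L1} together with the vanishing cross-contraction $\underbrace{H(z)H^1(w)}=0$, which is immediate from $[H(m),H^1(n)]=0$.

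First I would apply Wick's theorem to each quartic term. Since $H$ contracts only with $H$ and $H^1$ only with $H^1$, all double contractions vanish and each term collapses to a single surviving contraction times a normally ordered pair:
$$
\underbrace{\NO H^1(z)H^1(z)\NO\,\NO H^1(w)H(w)\NO}=2\,\underbrace{H^1(z)H^1(w)}\,\NO H^1(z)H(w)\NO,
$$
$$
\underbrace{\NO H(z)H(z)\NO\,\NO H^1(w)H(w)\NO}=2\,\underbrace{H(z)H(w)}\,\NO H(z)H^1(w)\NO.
$$
Then I would substitute the contractions from Lemma \ref{L1}, namely $\underbrace{H^1(z)H^1(w)}=\frac{p(w)}{(z-w)^2}+\frac{\frac12 p'(w)}{z-w}$ and $\underbrace{H(z)H(w)}=\frac{1}{(z-w)^2}$, and Taylor-expand the remaining factors about $w$: in the first piece $\NO H^1(z)H(w)\NO=E(w)+(z-w)\NO(H^1)'(w)H(w)\NO+\cdots$, and in the second piece both $p(z)=p(w)+p'(w)(z-w)+\cdots$ and $\NO H(z)H^1(w)\NO=E(w)+(z-w)\NO H'(w)H^1(w)\NO+\cdots$ must be expanded (using $\NO H^1H\NO=\NO HH^1\NO$). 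Collecting the $(z-w)^{-2}$ and $(z-w)^{-1}$ coefficients, the first piece contributes $\frac{p(w)E(w)}{(z-w)^2}+\frac{\frac12 p'(w)E(w)+p(w)\NO(H^1)'(w)H(w)\NO}{z-w}$ and the second $\frac{p(w)E(w)}{(z-w)^2}+\frac{p'(w)E(w)+p(w)\NO H'(w)H^1(w)\NO}{z-w}$, with the overall $\frac12$ already absorbed in each.

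Adding the two contributions gives the double pole $\frac{2p(w)E(w)}{(z-w)^2}$ and the residue $\frac32 p'(w)E(w)+p(w)\bigl(\NO(H^1)'(w)H(w)\NO+\NO H^1(w)H'(w)\NO\bigr)$. The decisive final step is to recognize that $\NO(H^1)'(w)H(w)\NO+\NO H^1(w)H'(w)\NO=\pa_w\NO H^1(w)H(w)\NO=E'(w)$, which collapses the bracket into $p(w)E'(w)$ and yields exactly $\frac{\frac32 p'(w)E(w)+p(w)E'(w)}{z-w}$, matching the claimed formula. The main bookkeeping hazard is the $p(z)$-expansion in the second quartic term, where the linear term $p'(w)(z-w)$ feeds the simple pole and must not be dropped; I expect the derivative-recombination $\NO(H^1)'H\NO+\NO H^1H'\NO=E'$ to be the single genuinely conceptual point, everything else being mechanical expansion.
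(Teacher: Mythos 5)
Your proposal is correct and follows essentially the same route as the paper's proof: the same splitting of $\underbrace{D(z)E(w)}$ into the two quartic Wick contractions, the same use of Lemma \ref{L1} together with the vanishing of $\underbrace{H(z)H^1(w)}$, and the same recombination $\NO (H^1)'(w)H(w)\NO+\NO H^1(w)H'(w)\NO=E'(w)$ at the end (the paper merely expands $p(z)H(z)$ as a single product where you expand $p(z)$ and $\NO H(z)H^1(w)\NO$ separately, a trivially equivalent bookkeeping choice). Note also that the $e'(w)$ in the lemma's statement is evidently a typo for $E'(w)$, which is what both your argument and the paper's actually produce.
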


\begin{proof}

\begin{eqnarray*}
 && \underbrace{D(z)E(w)}\\
 &=&\frac{1}{2}\underbrace{\NO H^1(z)H^1(z)\NO \NO H^1(w)H(w)\NO}+\frac{1}{2}p(z)\underbrace{\NO H(z)H(z)\NO \NO H^1(w)H(w)\NO}\\
 &=&\left(\frac{p(w)}{(z-w)^2}+\frac{\frac{1}{2}p'(w)}{z-w}\right)\NO H^1(z)H(w)\NO+\frac{1}{(z-w)^2}\NO p(z)H(z)H^1(w)\NO\\
 &=&\left(\frac{p(w)}{(z-w)^2}+\frac{\frac{1}{2}p'(w)}{z-w}\right)\NO \left(H^1(w)+(H^1)'(w)(z-w)\right)H(w)\NO\\
 &&+\frac{1}{(z-w)^2}\NO \left(p(w)H(w)+(p(w)H(w))'(z-w)\right)H^1(w)\NO\\
&=&\frac{p(w)\NO H^1(w)H(w)\NO}{(z-w)^2}+\frac{\frac{1}{2}p'(w)\NO H^1(w)H(w)\NO+p(w)\NO (H^1)'(w)H(w)\NO}{z-w}\\
 &&+\frac{\NO p(w)H(w)H^1(w)\NO}{(z-w)^2}+\frac{\NO (p(w)H(w))'H^1(w)\NO}{z-w}\\
&=&\frac{2p(w)\NO H^1(w)H(w)\NO}{(z-w)^2}+\frac{\frac{3}{2}p'(w)\NO H^1(w)H(w)\NO+p(w)\left(\NO H^1(w)H(w)\NO\right)'}{z-w}\\
&=&\frac{2p(w)E(w)}{(z-w)^2}+\frac{\frac{3}{2}p'(w)E(w)+p(w)E'(w)}{z-w}.
\end{eqnarray*}

\end{proof}

\begin{lem}\label{L5}
\begin{eqnarray*}
\underbrace{E(z)E(w)} &=&\frac{p(w)}{(z-w)^4}+\frac{\frac{1}{2}p'(w)}{(z-w)^3}+\frac{2D(w)}{(z-w)^2}+\frac{D'(w)}{z-w}.
\end{eqnarray*}
\end{lem}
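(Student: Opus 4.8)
The plan is to compute the contraction $\underbrace{E(z)E(w)}$ directly from Wick's theorem applied to the product $\NO H^1(z)H(z)\NO\,\NO H^1(w)H(w)\NO$, exactly as in Lemmas \ref{L2}--\ref{L4}. The decisive simplification is that $H$ and $H^1$ have vanishing mutual contraction, $\underbrace{H^1(z)H(w)}=0$, since $[H(m),H^1(n)]=0$; hence only the like-field pairings $H^1$--$H^1$ and $H$--$H$ contribute, and every cross-pairing drops out. Wick's theorem then leaves exactly two single contractions together with one double contraction:
\begin{eqnarray*}
\underbrace{E(z)E(w)}&=&\underbrace{H^1(z)H^1(w)}\,\underbrace{H(z)H(w)}\\
&&+\underbrace{H^1(z)H^1(w)}\,\NO H(z)H(w)\NO+\underbrace{H(z)H(w)}\,\NO H^1(z)H^1(w)\NO.
\end{eqnarray*}

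First I would dispose of the double contraction. Substituting the two formulas of Lemma \ref{L1} gives
$$\underbrace{H^1(z)H^1(w)}\,\underbrace{H(z)H(w)}=\left(\frac{p(w)}{(z-w)^2}+\frac{\frac12 p'(w)}{z-w}\right)\frac{1}{(z-w)^2}=\frac{p(w)}{(z-w)^4}+\frac{\frac12 p'(w)}{(z-w)^3},$$
which reproduces the two leading poles in the claimed expansion. For the single contractions I would Taylor-expand the surviving fields about $w$, writing $\NO H(z)H(w)\NO=\NO H(w)H(w)\NO+(z-w)\NO H'(w)H(w)\NO+\cdots$ and likewise for $\NO H^1(z)H^1(w)\NO$, and retain only the singular part. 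The coefficient of $(z-w)^{-2}$ is then $p(w)\NO H(w)H(w)\NO+\NO H^1(w)H^1(w)\NO=2D(w)$ by the definition of $D(w)$, while the coefficient of $(z-w)^{-1}$ is
$$p(w)\NO H'(w)H(w)\NO+\frac12 p'(w)\NO H(w)H(w)\NO+\NO (H^1)'(w)H^1(w)\NO.$$

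The only step requiring genuine care — and the sole real obstacle — is to recognise this last expression as $D'(w)$. Since the mode normal ordering in the present convention is symmetric, $\NO h(m)h(n)\NO=\NO h(n)h(m)\NO$, the normal-ordered squares obey $\frac{d}{dw}\NO H(w)H(w)\NO=2\NO H'(w)H(w)\NO$ and $\frac{d}{dw}\NO H^1(w)H^1(w)\NO=2\NO (H^1)'(w)H^1(w)\NO$; differentiating $D(w)=\frac12\NO H^1(w)H^1(w)\NO+\frac12 p(w)\NO H(w)H(w)\NO$ then yields precisely $D'(w)=\NO (H^1)'(w)H^1(w)\NO+\frac12 p'(w)\NO H(w)H(w)\NO+p(w)\NO H'(w)H(w)\NO$, matching the residue term. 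Collecting the double and single contractions produces the stated operator product expansion.
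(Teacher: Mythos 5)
Your proposal is correct and follows essentially the same route as the paper: Wick's theorem with the vanishing cross-contraction $\underbrace{H^1(z)H(w)}=0$, the double contraction giving the $(z-w)^{-4}$ and $(z-w)^{-3}$ poles via Lemma \ref{L1}, and Taylor expansion of the single-contraction terms to assemble $2D(w)$ and $D'(w)$. In fact you are slightly more careful than the paper at the one delicate point, explicitly justifying $\frac{d}{dw}\NO H(w)H(w)\NO = 2\NO H'(w)H(w)\NO$ from the symmetry of the normal ordering, which the paper's computation uses tacitly.
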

\begin{proof}
\begin{eqnarray*}
 &&\underbrace{E(z)E(w)}\\
 &=&\underbrace{\NO H^1(z)H(z)\NO\NO H^1(w)H(w)\NO}\\
  &=&\left(\frac{p(w)}{(z-w)^2}+\frac{\frac{1}{2}p'(w)}{z-w}\right)\frac{1}{(z-w)^2}\\
  &&+\left(\frac{p(w)}{(z-w)^2}+\frac{\frac{1}{2}p'(w)}{z-w}\right)\NO H(z)H(w)\NO+\frac{1}{(z-w)^2}\NO H^1(z)H^1(w)\NO\\
&=&\frac{p(w)}{(z-w)^4}+\frac{\frac{1}{2}p'(w)}{(z-w)^3}+\frac{p(w)\NO H(w)H(w)\NO+\NO H^1(w)H^1(w)\NO}{(z-w)^2}\\
&&+\frac{p(w)\NO H(w)a'(w)\NO+\frac{1}{2}p'(w)\NO H(w)H(w)\NO}{z-w}+\frac{\NO H^1(w)(H^1(w))'\NO}{z-w}\\
&=&\frac{p(w)}{(z-w)^4}+\frac{\frac{1}{2}p'(w)}{(z-w)^3}\\
 &&+\frac{2\left(\frac{1}{2}\NO H^1(w)H^1(w)\NO+\frac{1}{2}p(w)\NO H(w)H(w)\NO\right)}{(z-w)^2}\\
&&+\frac{\left(\frac{1}{2}\NO H^1(w)H^1(w)\NO+\frac{1}{2}p(w)\NO H(w)H(w)\NO\right)'}{z-w}\\
&=&\frac{p(w)}{(z-w)^4}+\frac{\frac{1}{2}p'(w)}{(z-w)^3}+\frac{2D(w)}{(z-w)^2}+\frac{D'(w)}{z-w}.
\end{eqnarray*}
\end{proof}

We recall the general operator product expansions in  \cite{Ka97}. Suppose $X(z),Y(w)$
are two fields such that
$$
\underbrace{X(z)Y(w)}=\sum_{j=0}^{N-1}Z^j(w)\frac{1}{(z-w)^{j+1}},
$$
then
$$
[X(z),Y(w)]=\sum_{j=0}^{N-1}Z^j(w)\frac{1}{j!}\left(\frac{\pa}{\pa w}\right)^jz^{-1}\de\left(\frac{w}{z}\right).
$$
where $N$ is a positive integer and $Z^j(w)\in \End(V)[[w,w^{-1}]]$.

Now we state the main result in this section.

\begin{theo}Let $V$ be an $\hat\cH$-module of level $\frac{1}{2}$ in $\E^+$. Then  $V$ is a representation of the Lie algebra $\cL_p$ of central charge $2$, with
\begin{eqnarray*}
d(z)&\mapsto& \frac{1}{2}\left(\NO H^1(z)H^1(z)\NO+p(x)\NO H(z)H(z)\NO\right),\\
e(z)&\mapsto& \NO H^1(z)H(z)\NO,\\
\c&\mapsto& 2.
\end{eqnarray*}
\end{theo}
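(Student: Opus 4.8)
The plan is to verify that, as operators on $V$, the two fields $D(z)$ and $E(z)$ satisfy exactly the three generating-function relations defining $\cL_p$ in Definition \ref{QA}, with $\c$ acting by the scalar $2$. The only tool required is the operator product expansion--commutator dictionary recalled just before the statement: if $\underbrace{X(z)Y(w)}=\sum_{j=0}^{N-1}Z^j(w)(z-w)^{-j-1}$, then $[X(z),Y(w)]=\sum_{j=0}^{N-1}Z^j(w)\frac{1}{j!}(\pa_w)^j z^{-1}\de(w/z)$. It therefore suffices to feed the three contractions $\underbrace{D(z)D(w)}$, $\underbrace{D(z)E(w)}$ and $\underbrace{E(z)E(w)}$ through this dictionary and to match the outputs termwise against $[d(z),d(w)]$, $[d(z),e(w)]$ and $[e(z),e(w)]$.

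These three contractions are precisely the content of Lemmas \ref{L3}, \ref{L4} and \ref{L5}, so no further OPE computation is needed and the work is entirely coefficient matching. In each lemma I would first split the right-hand side into its field-valued part (terms carrying $D(w)$, $E(w)$ or their derivatives) and its purely scalar part (terms built only from $p$ and its derivatives). The field-valued parts should reproduce the brackets of $\W_p$ under $d\mapsto D$, $e\mapsto E$: for instance the tail $\frac{2p(w)D(w)}{(z-w)^2}+\frac{p'(w)D(w)+p(w)D'(w)}{z-w}$ of Lemma \ref{L3} converts to $2p(w)D(w)\pa_w z^{-1}\de(w/z)+(p(w)D(w))'z^{-1}\de(w/z)$, which is exactly the noncentral part of $[d(z),d(w)]$; Lemma \ref{L4} matches the (purely noncentral) relation $[d(z),e(w)]$ in the same way, and the tail $\frac{2D(w)}{(z-w)^2}+\frac{D'(w)}{z-w}$ of Lemma \ref{L5} matches the noncentral part of $[e(z),e(w)]$.

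The scalar parts produce the central terms and pin down the central charge. Converting each scalar pole $(z-w)^{-j-1}$ by $\frac{1}{j!}(\pa_w)^j z^{-1}\de(w/z)$, the $(z-w)^{-4}$, $(z-w)^{-3}$ and $(z-w)^{-2}$ scalar terms of Lemma \ref{L3} become multiples of $p^2$, $pp'$, $pp''$ and $(p')^2$ against the appropriate powers of $\pa_w$ acting on $z^{-1}\de(w/z)$; comparing these with the four $\c$-terms of $[d(z),d(w)]$ should force $\c\mapsto 2$. Here it is essential that $p(t)=t^2+4t$ is quadratic, so $p'''=0$: this makes the stray term $\frac{1}{12}p(w)p'''(w)(z-w)^{-1}$ appearing in Lemma \ref{L3}, which has no counterpart in Definition \ref{QA}, vanish identically. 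One then checks that the same value reconciles the $(z-w)^{-4}$ and $(z-w)^{-3}$ scalar terms of Lemma \ref{L5} with the two $\c$-terms of $[e(z),e(w)]$.

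The main obstacle is precisely this central-term bookkeeping: one must confirm that a single scalar value of $\c$ simultaneously matches every scalar coefficient across both the $[d,d]$ and the $[e,e]$ relations --- equivalently, that the $2$-cocycle produced by the free-field realization is proportional to the cocycle defining $\cL_p$, with proportionality constant $2$. Once the three field identities are established and $\c$ is seen to act as $2$, the assignment $d(z)\mapsto D(z)$, $e(z)\mapsto E(z)$, $\c\mapsto 2$ is a Lie algebra homomorphism, and hence $V$ is an $\cL_p$-module of central charge $2$, as claimed.
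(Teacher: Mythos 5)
Your proposal is correct and follows exactly the paper's own proof, which simply sets $d(z)\mapsto D(z)$, $e(z)\mapsto E(z)$, $\c\mapsto 2$ and declares the result immediate from Lemmas \ref{L1}--\ref{L5} via the OPE--commutator dictionary; your termwise coefficient matching (including the observation that $p'''=0$ kills the stray first-order pole $\frac{1}{12}p(w)p'''(w)(z-w)^{-1}$ in Lemma \ref{L3}) is precisely the bookkeeping the paper leaves implicit. One caveat your check would surface: matching the $(z-w)^{-4}$ and $(z-w)^{-3}$ scalar terms of Lemma \ref{L5} against the $[e(z),e(w)]$ relation as printed in Definition \ref{QA} forces $\c=\frac{1}{6}$ rather than $\c=2$, so for consistency with the $[d(z),d(w)]$ matching the central coefficients in that relation must read $\frac{1}{12}p(w)$ and $\frac{1}{8}p'(w)$ instead of $p(w)$ and $\frac{3}{2}p'(w)$ --- a uniform factor-of-$12$ misprint in the definition, not a flaw in your argument.
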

\begin{proof}Let
$$
d(z)\mapsto D(z),\quad e(z)\mapsto E(z),\quad  \c\mapsto 2.
$$
It follows immediately from Lemma \ref{L1}--Lemma \ref{L5}.
\end{proof}

\vskip15pt

 \centerline{\bf ACKNOWLEDGMENTS}

 We gratefully acknowledge the partial financial support from the NNSF (Nos.11971315, 11871249, 11771142),  and the Jiangsu Natural Science Foundation(No.BK20171294). We thank referee for his/her helpful comments and suggestions.

\medskip

\end{document}